\newcommand{\ds}{\displaystyle}
\newcommand{\f}{\frac}
\newcommand{\om}{\Omega}
\newcommand{\p}{\partial}
\newcommand{\xx}{{\bm x}}
\begin{document}
\title{Optimal order finite element approximations for variable-order time-fractional diffusion equations 
%ChangeChange
%\thanks{This work was funded by the OSD/ARO MURI Grant W911NF-15-1-0562 and by the National Science Foundation under Grant DMS-1620194.}
}
\authorrunning{Zheng, Zeng and Wang}
\titlerunning{FEM for variable-order FDEs}        

\author{Xiangcheng Zheng  \and
  Fanhai Zeng \and Hong Wang}

\institute{Xiangcheng Zheng \at
              Department of Mathematics, University of South Carolina, Columbia, South Carolina, 29208 USA\\
              \email{xz3@math.sc.edu}  
              \and
         Fanhai Zeng \at Department of Mathematics, National University of Singapore, Singapore 119076\\
              \email{fanhaiz@foxmail.com}     
           \and
         Hong Wang \at  Department of Mathematics, University of South Carolina, Columbia, South Carolina, 29208 USA\\
              \email{hwang@math.sc.edu} 
}

\date{Received: date / Accepted: date}

\maketitle

\begin{abstract}
We study a fully discrete finite element method for variable-order time-fractional diffusion equations with a time-dependent variable order. Optimal convergence estimates are proved with the first-order accuracy in time (and second order accuracy in space) under the uniform or graded temporal mesh without full regularity assumptions of the solutions. Numerical experiments are presented to substantiate the analysis.
\keywords{Time-fractional diffusion equations \and Variable-order \and Graded mesh \and Finite element method}
\subclass{35S10 \and 65L12 \and 65R20}
\end{abstract}

\section{Introduction}
Fractional partial differential equations (FPDEs) have shown to provide adequate descriptions for the challenging phenomena such as the anomalous diffusive transport and the memory effect \cite{MeeSik,MetKla00,Pod}. For instance, in the diffusive transport in heterogeneous porous media, a large amount of particles may get absorbed to the surface of the rock. Thus, the travel time of the adsorbed particles may deviate from that of the particles in the bulk phase \cite{ZhoStr}, which leads to a subdiffusive transport that can be modeled by a time-fractional diffusion PDE (tFPDE) \cite{MeeSik,MetKla00}. 

Extensive mathematical and numerical analysis of FPDEs has been conducted  \cite{Die,DieFor,ErvRoo05,KilSri,LeMclMus,Luc,Pod,SakYam,StyOriGra}, and it is gradually getting clear that the FPDEs introduce mathematical issues that are not common in the context of integer-order PDEs. For instance, the smoothness of the coefficients and right-hand side of a linear elliptic or parabolic fractional PDE in one space dimension cannot ensure the smoothness of its solutions \cite{JinLazPas,StyOriGra,WanYanZhu,WanZha}. Hence, many error estimates in the literature that were proved under full regularity assumptions of the true solutions are inappropriate. 

Variable-order tFPDEs, in which the order of the fractional derivatives varies in time as $t \rightarrow 0$ to accommodate the impact of the local initial condition at time $t=0$, should be a natural candidate to eliminate the nonphysical singularity of the solutions to (constant-order) tFPDEs and open up opportunities for modeling multiphysics phenomena from nonlocal to local dynamics and vice versa \cite{LiWan,PanPer,SunCheChe,ZenZhaKar,ZhuLiu}. 

Due to the difficulties of solving variable-order tFPDEs analytically, several numerical methods have been developed (see e.g. \cite{ZhuLiu,ZenZhaKar}) under certain smoothness assumptions of the solutions. It was shown in \cite{StyOriGra} that the first order time derivatives of solutions to the $\alpha$-order time-fractional diffusion equations (tFDEs) exhibit the singularity of $\mathcal O(t^{\alpha-1})$ at the initial time $t=0$, which leads to a sub-optimal convergence of the fully discrete finite difference method. It was also proved that by using the graded temporal mesh with a proper chosen mesh grading parameter according to the singularity of the solutions, the optimal convergence rate of the proposed finite difference method can be recovered.

 Recently, the wellposedness of a variable-order tFDE model and the regularity of its solutions were studied in \cite{WanZheJMAA}. In particular, the solutions have full regularity like those to the integer-order tFDEs if the variable order has an integer limit at $t = 0$ or exhibit singularity at $t = 0$ like in the case of the constant-order tFDEs if the variable order has a non-integer
value at time $t = 0$. Based on these theoretical results, we present a first order time-discretized finite element method for this variable-order tFDE model. When the variable order smoothly transit the fractional order model to the integer order ones near the initial time, the solutions have full regularity and the optimal convergence is proved under the uniform temporal mesh. Otherwise, a graded mesh with a properly chosen mesh grading parameter in terms of the singularity of the solutions at the initial time is applied to recover the optimal convergence rate.

 The rest of the papers are organized as follows: In \S 2 a variable-order tFDE model and the auxiliary results to be used subsequently were presented. In \S 3 a first order time-discretized finite element method was developed for the proposed model and we proved the corresponding optimal error estimates under the uniform or graded temporal mesh in terms of the regularity of the solutions in \S 4. Several numerical experiments were presented in \S 5 to demonstrate the theoretical analysis.

\section{Model problem and preliminaries} 
Let $m \in \mathbb{N}$, $1 \le p \le \infty$, $\Omega \subset \mathbb{R}^d$ ($d=1,2,3$) be a simply-connected bounded domain with smooth boundary $\p \om$ and ${\cal I}\subset [0,\infty)$ be a bounded interval. Let $L_p(\om)$ be the spaces of the $p$-th power Lebesgue integrable functions on $\om$ and $W^m_p(\om)$ be the Sobolev spaces of functions with derivatives of order up to $m$ in $L_p(\om)$. Let $H^m(\om) := W^m_2(\om)$ and $H^m_0(\Omega)$ be the completion of $C^\infty_0(\om)$, the space of infinitely many time differentiable functions with compact support in $\om$, in $H^m(\om)$ \cite{AdaFou}. For the case of non-integer order $s$, the fractional Sobolev spaces $H^s(\Omega)$ are defined by interpolation, see \cite{AdaFou}. Furthermore, for the Banach space ${\cal X}$, we introduce the Sobolev spaces involving time \cite{AdaFou,Eva}
$$W^m_p({\cal I};{\cal X}) := \big \{ f : f_t^{(l)}(\cdot,t) \in {\cal X},~t \in {\cal I},~ \bigl \|  f_t^{(l)}(\cdot,t) \bigr \|_{\cal X}  \in L_p({\cal I}), ~ 0 \le l \le m \big \}.$$
In particular, $W^0_p({\cal I};{\cal X}) = L_p({\cal I};{\cal X})$ for $1 \le p \le \infty$. We also let $C^m({\cal I};{\cal X})$ be the spaces of functions with continuous derivatives up to order $m$ on ${\cal I}$ equipped with the norm
$$\|f\|_{C^{m}(\cal I;\cal X)}:=\max_{0\leq l\leq m}\sup_{t\in \cal I}\,\bigl \|  f_t^{(l)}(\cdot,t) \bigr \|_{\cal X}.$$

In this paper we study the initial-boundary value problem of a variable-order linear tFDE 
\begin{equation}\label{ModelR}\begin{array}{c}
u_t + k(t)~{}_0^{R}D_t^{1-\alpha(t)} u +\mathcal{L} u= f(\bm x,t),~~(\bm x,t) \in \Omega\times(0,T]; \\ [0.05in]
\ds u(\bm x,0)=u_0(\bm x),~\bm x\in \Omega; \quad u(\bm x,t) = 0,~(\bm x,t) \in \p \Omega\times[0,T]. 
\end{array}\end{equation}
Here $u_t$ refers to the first-order partial derivative in time, $\xx := (x_1,\cdots,x_d)$, $\mathcal{L}:=-\nabla \cdot \big(\bm K(\bm x)\nabla\big)$ with $\nabla := (\p/\p x_1,\cdots,\p/\p x_d\big)^T$ and ${\bm K}(\xx):=(k_{ij}(\bm x))_{i,j=1}^d$ the diffusion tensor. We make the following assumptions throughout the paper.
\paragraph{Assumption A.}
\begin{eqnarray}%\label{CK}
& \alpha, k \in C[0,T],~~k_m:=\min_{t\in [0,T]}k(t)>0; \notag\\[0.05in] \label{alpha}
& 0< \alpha_m := \inf_{t \in [0,T]} \alpha(t)\leq \alpha(t)  \le 1, ~~t \in [0,T],\\[0.05in]
&\ds\lim_{t \rightarrow 0^+} \big ( \alpha(t) - \alpha(0) \big) \ln t = 0; \notag\\[0.025in] %\label{D}
\notag
& 0 < K_m \le {\bm \xi}^T\! \bm K {\bm \xi} \le K_M < \infty,~~ \forall {\bm \xi} \in \mathbb{R}^d, ~~ | {\bm \xi} | = 1, \\[0.05in]
&\ds k_{ij}\in C^1(\Omega),~~1\leq i,j\leq d.\notag
\end{eqnarray}
The variable-order Riemann-Liouville fractional derivative is defined by \cite{ZenZhaKar,ZhuLiu} 
%\begin{equation}\label{RLFD}
$$\ds {}_a^RD_t^{1-\alpha(t)}g(t) := \bigg [ \frac{1}{\Gamma\big(\alpha(t)\big)}\frac{d}{d\xi} \int_a^\xi\frac{g(s)}{(\xi-s)^{1-\alpha(t)}}ds\bigg]\bigg|_{\xi=t}.$$
%\end{equation}
Moreover, we also use the variable-order fractional integral operator ${}_aI_t^{\alpha(t)}$ and the Caputo fractional differential operator ${}_a^CD_t^{\alpha(t)}$ \cite{ZenZhaKar,ZhuLiu}
%\begin{equation}\label{CFD}
$$\ds {}_aI_t^{\alpha(t)}g(t):=\frac{1}{\Gamma(\alpha(t))}\int_a^t\frac{g(s)}{(t-s)^{1-\alpha(t)}}ds, \quad
{}_a^CD_t^{1-\alpha(t)}g(t)={}_aI_t^{\alpha(t)}g'(t).$$
%\end{equation}
\begin{remark} The constant-order analogue of the proposed model is known as the mobile-immobile time-fractional diffusion equations, see e.g., \cite{LiuLiu}. %For constant $\alpha$ and $k$, applying ${}_aI_t^{1-\alpha}$ on both sides of (\ref{ModelR}) yields a Caputo tFPDE of order $\alpha$ \cite{MetKla00,Pod}. For more complicated tFPDEs such as Feynman-Kac tFPDEs \cite{TurCar} and variable-order tFPDEs \cite{ZenZhaKar,ZhuLiu}, a first-order time derivative plus an (often) complicated fractional time derivative of order $1-\alpha$, instead of a single Caputo fractional time derivative of order $\alpha$, is often used.
\end{remark}

The relation between the Riemann-Liouville and Caputo fractional derivatives \cite{Die,ErvRoo05,Pod} was extended to the variable-order analogues \cite{ZhuLiu}.
\begin{lemma}\label{lem:R-C}
	Let $g \in W^1_1(0,T)$. Then \vspace{-0.1in}
	\begin{equation*}
	{}_0^RD_t^{1-\alpha(t)}g(t) = {}_0^CD_t^{1-\alpha(t)}g(t) + \frac{g(0) t^{\alpha(t)-1}}{\Gamma(\alpha(t))},~~t\in (0,T].
	\end{equation*}
\end{lemma}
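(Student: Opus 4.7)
The plan is to reduce to the constant-order identity by treating $\alpha(t)$ as a frozen parameter: once we fix the outer time $t$, the quantity $\beta := \alpha(t)$ is just a constant in $(0,1]$, and the expression inside the brackets of the definition of ${}_0^R D_t^{1-\alpha(t)}g$ is exactly the ordinary Riemann–Liouville derivative of order $1-\beta$ of $g$, evaluated at $\xi=t$. So the whole identity follows if I can establish the classical constant-order version
\[
 \frac{1}{\Gamma(\beta)}\frac{d}{d\xi}\int_0^\xi \frac{g(s)}{(\xi-s)^{1-\beta}}\,ds \;=\; \frac{1}{\Gamma(\beta)}\int_0^\xi \frac{g'(s)}{(\xi-s)^{1-\beta}}\,ds \;+\; \frac{g(0)\,\xi^{\beta-1}}{\Gamma(\beta)}
\]
for $g\in W^1_1(0,T)$ and then set $\xi=t$, $\beta=\alpha(t)$.

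To prove the bracketed identity, I would first write $g(s)=g(0)+\int_0^s g'(\tau)\,d\tau$ and substitute, so that the inner integral splits into
\[
 g(0)\int_0^\xi (\xi-s)^{\beta-1}ds \;+\; \int_0^\xi (\xi-s)^{\beta-1}\!\int_0^s g'(\tau)\,d\tau\,ds.
\]
The first piece integrates explicitly to $g(0)\xi^{\beta}/\beta$. For the second, since $g'\in L_1(0,T)$ and $(\xi-s)^{\beta-1}$ is integrable in $s$ for $\beta>0$, Fubini lets me exchange the order of integration over the triangle $\{0\le\tau\le s\le\xi\}$, producing
\[
 \int_0^\xi g'(\tau)\int_\tau^\xi (\xi-s)^{\beta-1}ds\,d\tau \;=\; \frac{1}{\beta}\int_0^\xi g'(\tau)(\xi-\tau)^{\beta}\,d\tau.
\]
Now I differentiate in $\xi$. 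The first term gives $g(0)\xi^{\beta-1}$ directly. For the second I apply Leibniz's rule: the boundary contribution at $s=\xi$ vanishes because the factor $(\xi-\tau)^{\beta}$ is zero there (using $\beta>0$), and differentiation under the integral produces $\int_0^\xi g'(\tau)(\xi-\tau)^{\beta-1}d\tau$. Dividing by $\Gamma(\beta)$ yields the desired constant-order identity.

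The main delicate point is legitimizing both the Fubini exchange and the differentiation under the integral with only $g\in W^1_1$ and $\beta\in(0,1]$; the key observation in each case is that $(\xi-s)^{\beta-1}$ belongs to $L_1(0,\xi)$ and that the antiderivative $(\xi-\tau)^{\beta}/\beta$ vanishes at $\tau=\xi$, so no boundary term appears and the resulting integrand is again $L_1$ in $\tau$. Finally, evaluating at $\xi=t$ and $\beta=\alpha(t)$, the integral term on the right becomes ${}_0 I_t^{\alpha(t)} g'(t) = {}_0^C D_t^{1-\alpha(t)} g(t)$ by the definitions given in the excerpt, which completes the proof.
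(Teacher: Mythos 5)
Your proof is correct. The paper itself does not prove this lemma --- it simply states it as the variable-order extension of the classical Riemann--Liouville/Caputo relation and cites the literature (Zhuang et al.) --- so there is no in-paper argument to compare against; yours is the standard self-contained derivation. The one genuinely important observation you make, and make correctly, is that in the paper's definition of ${}_0^RD_t^{1-\alpha(t)}$ the order $\alpha(t)$ is evaluated at the outer time and is therefore \emph{frozen} under the $d/d\xi$; this is exactly what licenses the reduction to the constant-order identity with $\beta=\alpha(t)$, and without it the lemma would be false (a genuinely $\xi$-dependent order inside the derivative would produce extra logarithmic terms). Your Fubini justification is sound since $\int_0^\xi|g'(\tau)|(\xi-\tau)^\beta\,d\tau\le \xi^\beta\|g'\|_{L_1}$. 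The only point worth tightening is the final differentiation step: for $g\in W^1_1$ the cleanest route is to avoid Leibniz's rule for the singular kernel altogether and instead verify, by one more application of Fubini, that $\int_0^\xi\bigl[\int_0^\sigma g'(\tau)(\sigma-\tau)^{\beta-1}d\tau\bigr]d\sigma=\beta^{-1}\int_0^\xi g'(\tau)(\xi-\tau)^{\beta}d\tau$, which exhibits the second term as an absolutely continuous function whose derivative is the fractional integral of $g'$ almost everywhere; this is the sense in which the identity holds under the stated $W^1_1$ regularity.
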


In this paper we use $Q$ to denote generic positive constants that may assume different values at different occurrences. For convenience, we may drop the subscript $L_2$ in $(\cdot,\cdot)_{L_2}$ and $\| \cdot \|_{L_2}$ as well as the notation $\om$ in the Sobolev spaces and norms, and abbreviate $W^m_p(0,T;{\cal X})$ and $W^m_p({\cal X})$, when no confusion occurs.

\section{Fully discrete finite element method for variable-order tFDEs}
By Lemma \ref{lem:R-C} and Theorem \ref{thm:Wellpose}, the Riemann-Liouville variable-order tFDE (\ref{ModelR}) and the following Caputo variable-order tFDE
\begin{equation}\label{ModelC}
\f{\p u}{\p t} + k(t)~{}_0^{C}D_t^{1-\alpha(t)} u+\mathcal{L} u= -\frac{k(t)u_0(\bm x) t^{\alpha(t)-1}}{\Gamma(\alpha(t))}+ f(\bm x,t)
\end{equation}
 coincide. So we will develop and analyze the corresponding finite element schemes for (\ref{ModelC}).

Let $t_n := T(n/N)^r$, $0\leq n\leq N$ be a partition of $[0,T]$, which forms a graded mesh when $r>1$ and reduces to a uniform partition for $r=1$. Applying the mean-value theorem we bound $\tau_n := t_n - t_{n-1}$ by 
  \begin{equation}\label{bndtau}
rT \frac{ (n-1)^{r-1}}{N^r}\leq \tau_n=T\Big(\frac{n}{N}\Big)^r-T\Big(\frac{n-1}{N}\Big)^r \leq rT \frac{ n^{r-1}}{N^r}, \quad r \ge 1,\quad 1\leq n\leq N.
  \end{equation}

  Define $\Omega_e$ a quasi-uniform partition of $\Omega$ with parameter $h$ and $S_h$ the space of piece-wise linear functions on $\Omega$ with compact support. The Ritz projection $\Pi:H^1_0(\Omega)\rightarrow S_h$ defined by
 \begin{equation}\label{Ritz}
 \ds \big(\bm K(\cdot)\nabla(g-\Pi g),\nabla \chi\big)=0,~~\forall \chi\in S_h,~\mbox{for}~g\in H_0^1(\Omega),
 \end{equation}
 has the following approximation property \cite{Tho}
 \begin{equation}\label{ritz}
 \ds \|g-\Pi g\|_{L_2(\Omega)}\leq Qh^2\|g\|_{H^2(\Omega)},~~\forall ~g\in H^2(\Omega)\cap H_0^1(\Omega).
 \end{equation}

We discretize $u_t$ and ${}_0^CD_t^{1-\alpha(t)}u$ at $t=t_n$, $1\leq n\leq N$ by
 \begin{equation}\label{Disc:e1}\begin{array}{rl}
\ds u_t(\bm x,t_n)&\ds = \delta_{\tau_n} u(\bm x,t_n) + r_{1,n}\\[0.1in]
& \ds\hspace{-0.15in}:= \frac{u(\bm x,t_n)-u(\bm x,t_{n-1})}{\tau_n} + \f1{\tau_n}\int_{t_{n-1}}^{t_n} u_{tt}(\bm x,t)(t-t_{n-1})dt, \\[0.15in]
\ds {}_0^CD_t^{1-\alpha(t_n)}&\ds u(\bm x,t_n)  \\
 &\ds\hspace{-0.25in}= \frac{1}{\Gamma(\alpha(t_n))} \sum_{k=1}^n \Big [\int_{t_{k-1}}^{t_k} \f{\delta_{\tau_k}u(\bm x,t_k)}{(t_n-t)^{1-\alpha(t_n)}}dt  + \int_{t_{k-1}}^{t_k} \frac{u_t - \delta_{\tau_k}u(\bm x,t_k)}{(t_n-t)^{1-\alpha(t_n)}} dt \Big ] \\[0.175in]
& \ds\hspace{-0.25in} =: \delta_{\tau}^{1-\alpha(t_n)} u(\bm x,t_n) + r_{2,n}, \\%[-0.265in]
 \end{array}\end{equation}
 with
 \begin{equation}\label{Disc:e2}\begin{array}{l}
 \ds \delta_{\tau}^{1-\alpha(t_n)} u(\bm x,t_n) \\[0.1in]
 \ds\quad \quad:= \frac{1}{\Gamma(1+\alpha(t_n))} \sum_{k=1}^n \Big [\big ( t_n - t_{k-1} \big)^{\alpha(t_n)} - \big(t_n-t_k\big)^{\alpha(t_n)} \Big] \delta_{\tau_k} u(\bm x,t_k) \\[0.15in]
 \ds \quad \quad= \frac{1}{\Gamma(1+\alpha(t_n))}\sum_{k=1}^n b^n_k \big (u(\bm x,t_k) - u(\bm x,t_{k-1}) \big ),\\[0.15in]
 %&\ds \hspace{-0.1in} = \frac{1}{\Gamma(1+\alpha(t_n))\tau^{1-\alpha(t_n)}} \Big [ V^n - \sum_{k=1}^{n-1} \big (b_{n-k-1}^n - b^n_{n-k}\big) u^k - b_{n-1}^nu^0\Big ],\\[0.25in]
 r_{2,n}\ds  := \frac{1}{\Gamma(\alpha(t_n))} \sum_{k=1}^n \int_{t_{k-1}}^{t_k} \frac{u_t - \delta_{\tau_k}u(\bm x,t_k)}{(t_n-t)^{1-\alpha(t_n)}} dt \\[0.15in]
 \ds \qquad = \frac{1}{\Gamma(\alpha(t_n))} \sum_{k=1}^n \int_{t_{k-1}}^{t_k} \frac{1}{\tau_k (t_n-t)^{1-\alpha(t_n)}}\Big [\int_{t_{k-1}}^{t_k}\int_s^t u_{tt}(\bm x,\theta) d\theta ds \Big ] dt ,
 \end{array}\end{equation}
 where 
 $$b^n_k :=\frac{ (t_n - t_{k-1})^{\alpha(t_n)} - (t_n-t_k)^{\alpha(t_n)}}{\tau_k},~~1 \le k \le n \le N$$ 
 has the following properties \cite{StyOriGra}
 \begin{equation}\label{bn}
 \left\{\begin{array}{l}
 \tau_n^{\alpha(t_n)-1} = b^n_n > b^n_{n-1} > \cdots > b^n_k > \ldots b^n_1 > 0, \\[0.05in]
 \ds \alpha(t_n)(t_n-t_{k-1})^{\alpha(t_n)-1}\leq b^n_k\leq \alpha(t_n)(t_n-t_k)^{\alpha(t_n)-1}.
 \end{array} \right.
 \end{equation}
 Let $u^n:=u(\bm x,t_n)$. We plug (\ref{Disc:e1}) into (\ref{ModelC}), multiply $\chi\in H_0^1(\Omega)$ on both sides and integrate the resulting equation on $\Omega$ to get the weak formulation of (\ref{ModelC})
 \begin{equation}\label{ref}
 \begin{array}{l}
 \ds\hspace{-0.1in} (\delta_{\tau_n}  u^n,\chi)+\big(\bm K(\cdot)\nabla u^n,\nabla \chi\big)= -k(t_n)(\delta_{\tau}^{1-\alpha(t_n)}  u^n,\chi)-\frac{k(t_n)t_n^{\alpha(t_n)-1}}{\Gamma(\alpha(t_n))}(u_0,\chi)\\[0.15in] 
 \quad\quad\ds+(f(\cdot,t_n),\chi)-\big(k(t_n) r_{2,n}+r_{1,n},\chi\big),~~\chi\in H_0^1(\Omega),~~n=1,\cdots,N.
 \end{array}
 \end{equation}
 We drop the truncation error terms to obtain a first order time-discretized finite element scheme for (\ref{ModelC}): find $u_h^n\in S_h$ such that
  \begin{equation}\label{FEM}
  \begin{array}{l}
  \ds \hspace{-0.18in}(\delta_{\tau_n}  u_h^n,\chi)+\big(\bm K(\cdot)\nabla u_h^n,\nabla \chi\big)= -k(t_n)(\delta_{\tau}^{1-\alpha(t_n)}  u_h^n,\chi)-\frac{k(t_n)t_n^{\alpha(t_n)-1}}{\Gamma(\alpha(t_n))}(u_0,\chi)\\[0.15in] 
  \quad\quad\ds+(f(\cdot,t_n),\chi),~~\chi\in S_h,~~n=1,\cdots,N.
  \end{array}
  \end{equation}

 \section{Convergence estimates of the finite element approximations}
 We prove the optimal error estimates of the finite element approximations under the uniform or graded mesh in terms of the regularity of the solutions to the proposed model.
\subsection{Analysis of truncation errors}
 The estimates of the local truncation errors $r^n_1$ and $r^n_2$ in (\ref{Disc:e1}) and (\ref{Disc:e2}) are given in the following theorem.
 \begin{theorem}\label{thm:Local} Suppose that $u_0\in \check H^4$ and $f\in H^1(0,T;\check{H}^2)\cap H^2(0,T;L_2)$. For the case of $\alpha(0)=1$, $\alpha'(0)=0$ and $\lim_{t\rightarrow 0^+}\alpha'(t)\ln t$ is finite, the following estimate holds under the uniform temporal partition 
 	\begin{equation}\label{thmLocal:e1}
 	\begin{array}{l}
 	\ds\hspace{-0.1in} \|r_{1}\|_{\hat L_\infty(0,T;L_2)}:=\max_{1\leq n\leq N}\|r_{1,n}\|_{L_2}\leq Q  Q_0N^{-1},~~\| r_{2}\|_{\hat L_\infty(0,T;L_2)} \leq Q  Q_0N^{-1},\\[0.1in]
 	 \ds \quad\quad Q_0:=\big(\|u_0\|_{\check{H}^4}+\|f\|_{H^1(0,T;\check{H}^2)}+\|f\|_{H^2(0,T;L_2)}\big).
 	\end{array}
 	\end{equation} 
 Otherwise, the following estimates hold under the graded mesh with $r\alpha(0)>1$
 	\begin{equation}\label{thmLocal:e2}\begin{array}{l}
 	\ds  \|r_{1,n} \|_{\hat L_\infty(0,T;L_2)}\leq Q  Q_0n^{-1-r(1-\alpha(0))}N^{r(1-\alpha(0))}, \\[0.1in]
 	\ds \|r_{2,n}\|_{\hat L_\infty(0,T;L_2)} \le Q  Q_0\Big( \hat\delta_{n,1}N^{r(1-\alpha(0)-\alpha(1))}\\[0.1in]
 	\ds\qquad~~\qquad\qquad\qquad +(1-\hat\delta_{n,1}) N ^{r(1-\alpha(0) - \alpha(t_n))} n^{-1-r(1-\alpha(0) - \alpha(t_n))} \Big ).
 	\end{array}\end{equation}
 	Here $\hat\delta_{m,n}$ is the Kronecker delta function.
 \end{theorem}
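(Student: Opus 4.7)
The plan is to bound $r_{1,n}$ and $r_{2,n}$ separately, in each case invoking pointwise regularity estimates on $u_{tt}(\cdot,t)$ furnished by the wellposedness result Theorem \ref{thm:Wellpose}. In Case 1 (smooth transit with $\alpha(0)=1$, $\alpha'(0)=0$) that theorem should provide $u_{tt}\in L_\infty(0,T;L_2)$ with $\|u_{tt}(\cdot,t)\|_{L_2}\leq QQ_0$, whereas in Case 2 it should give a pointwise bound of the form $\|u_{tt}(\cdot,t)\|_{L_2}\leq QQ_0\,t^{\alpha(0)-2}$ on $(0,T]$. This singular bound is exactly what forces the graded mesh with $r\alpha(0)>1$.

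First I would dispose of $r_{1,n}$. Starting from
\[
\|r_{1,n}\|_{L_2}\leq \frac{1}{\tau_n}\int_{t_{n-1}}^{t_n}\|u_{tt}(\cdot,t)\|_{L_2}(t-t_{n-1})\,dt,
\]
in Case 1 I simply use $\tau_n=T/N$ to read off the $QQ_0 N^{-1}$ bound. In Case 2 I substitute $\|u_{tt}(\cdot,t)\|_{L_2}\leq QQ_0 t^{\alpha(0)-2}$ and separate $n=1$ from $n\geq 2$: for $n=1$ a direct integration and $t_1=TN^{-r}$ yield $t_1^{\alpha(0)-1}\sim N^{r(1-\alpha(0))}$; for $n\geq 2$ the monotonicity $t^{\alpha(0)-2}\leq t_{n-1}^{\alpha(0)-2}$ on $[t_{n-1},t_n]$ gives $\|r_{1,n}\|_{L_2}\leq QQ_0\tau_n t_{n-1}^{\alpha(0)-2}$, and inserting (\ref{bndtau}) together with $t_{n-1}\sim T(n-1)^r/N^r$ produces the advertised product $n^{-1-r(1-\alpha(0))}N^{r(1-\alpha(0))}$.

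For $r_{2,n}$ I would work from the triple-integral expression in (\ref{Disc:e2}). In Case 1 the uniform bound on $u_{tt}$ dominates the inner double integral by $\tau_k^2$, so
\[
\|r_{2,n}\|_{L_2}\leq \frac{QQ_0}{\Gamma(\alpha(t_n))}\sum_{k=1}^n \tau_k\int_{t_{k-1}}^{t_k}(t_n-t)^{\alpha(t_n)-1}\,dt,
\]
which telescopes via (\ref{bn}) to a quantity of order $\tau_{\max}\,t_n^{\alpha(t_n)}$, giving $O(N^{-1})$. In Case 2 the factor $\theta^{\alpha(0)-2}$ must be kept: for $k=1$ I would evaluate $\int_0^{t_1}\int_s^t\theta^{\alpha(0)-2}d\theta\,ds$ explicitly and combine with $\int_0^{t_1}(t_n-t)^{\alpha(t_n)-1}dt$ to extract the $n=1$ contribution $N^{r(1-\alpha(0)-\alpha(1))}$ and to absorb the first-interval piece into the $n\geq 2$ formula. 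For $k\geq 2$, the monotonicity $\|u_{tt}(\cdot,\theta)\|_{L_2}\leq QQ_0\, t_{k-1}^{\alpha(0)-2}$ pulls the singular factor out, and the inequality $\tau_k^{-1}\int_{t_{k-1}}^{t_k}(t_n-t)^{\alpha(t_n)-1}dt\leq b_k^n/\alpha(t_n)$ from (\ref{bn}) leaves the sum $\sum_{k=2}^n \tau_k^2 t_{k-1}^{\alpha(0)-2} b_k^n$. I then plan to estimate this sum using the mesh geometry (\ref{bndtau}); the dominant contribution comes from the terminal index $k=n$, where $b_n^n=\tau_n^{\alpha(t_n)-1}$, $\tau_n\sim r n^{r-1}/N^r$, and $t_{n-1}^{\alpha(0)-2}\sim (n/N)^{r(\alpha(0)-2)}$, and the arithmetic reassembles to $N^{r(1-\alpha(0)-\alpha(t_n))}n^{-1-r(1-\alpha(0)-\alpha(t_n))}$.

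The most delicate step is the bookkeeping around the $k=1$ subinterval inside $r_{2,n}$ for $n\geq 2$: the bound $\theta^{\alpha(0)-2}$ is only barely integrable at the origin, and it is precisely the mesh-grading threshold $r\alpha(0)>1$ that makes this first-step contribution dominated by the generic $k\geq 2$ term rather than generating an extra power of $N$ that would destroy the estimate. A secondary worry is ensuring that the variable order $\alpha(t_n)$ in the outer kernel, which is independent of the summation index $k$, does not spoil the telescoping producing $b_k^n$; this should follow directly from the explicit formula for $b_k^n$ in (\ref{Disc:e2}).
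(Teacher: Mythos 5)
The paper does not prove this theorem in-house: its ``proof'' is a one-line citation to Theorem 7 of \cite{WanZhe}, with the regularity input supplied by Theorem \ref{thm:u2}. Your self-contained reconstruction is therefore necessarily a different route, and it is essentially the right one --- the standard Stynes--O'Riordan--Gracia-type truncation analysis driven by the pointwise bounds $\|u_{tt}(\cdot,t)\|_{L_2}\le QQ_0$ (smooth case) and $\|u_{tt}(\cdot,t)\|_{L_2}\le QQ_0\,t^{\alpha(0)-2}$ (singular case). Two corrections on the inputs: these second-derivative bounds come from Theorem \ref{thm:u2}, not Theorem \ref{thm:Wellpose} (which only gives $C^1$ control); and the ``otherwise'' branch also contains the case $\alpha(0)=1$ with $\alpha'(0)\ne 0$ or non-finite $\lim_{t\to 0^+}\alpha'(t)\ln t$, where Theorem \ref{thm:u2} gives only a logarithmic blow-up of $u_{tt}$ --- harmless, but your dichotomy as written does not cover it. Your treatment of $r_{1,n}$, of the Case 1 bound for $r_{2,n}$, and of the $k=1$ subinterval (evaluating $\int_s^t\theta^{\alpha(0)-2}d\theta$ exactly rather than integrating the non-integrable pointwise bound over all of $[0,t_1]$, and observing that $r\alpha(0)>1$ is exactly what lets the first-interval piece be absorbed) are all correct and show the right instincts.

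The one step that would fail as stated is the final estimate of $S_n:=\sum_{k=2}^n\tau_k^2\,t_{k-1}^{\alpha(0)-2}\,b_k^n$. The terminal term $k=n$ is \emph{not} dominant: a direct computation gives $\tau_n^{1+\alpha(t_n)}t_{n-1}^{\alpha(0)-2}\sim n^{r(\alpha(0)+\alpha(t_n)-1)-1-\alpha(t_n)}N^{-r(\alpha(0)+\alpha(t_n)-1)}$, which is smaller by a factor $n^{-\alpha(t_n)}$ than the target $n^{r(\alpha(0)+\alpha(t_n)-1)-1}N^{-r(\alpha(0)+\alpha(t_n)-1)}$; so your arithmetic ``reassembling'' the $k=n$ term to the target contains a slip, and in any case a sum of $n$ positive terms cannot be bounded by its largest member without further argument. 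The repair is the splitting used in the paper's own proof of Theorem \ref{thmtrun}: for $k>\lceil n/2\rceil$ use $t_{k-1}^{\alpha(0)-2}\le Q t_n^{\alpha(0)-2}$, pull out $\max_k\tau_k\le Qn^{r-1}N^{-r}$, and telescope $\sum_k\tau_k b_k^n\le t_n^{\alpha(t_n)}$; for $2\le k\le\lceil n/2\rceil$ use $b_k^n\le Q t_n^{\alpha(t_n)-1}$ and sum $\tau_k^2t_{k-1}^{\alpha(0)-2}\sim k^{r\alpha(0)-2}N^{-r\alpha(0)}$, where again $r\alpha(0)>1$ guarantees $\sum_{k\le n/2}k^{r\alpha(0)-2}\le Qn^{r\alpha(0)-1}$. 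Both halves then land exactly on the claimed $n^{-1-r(1-\alpha(0)-\alpha(t_n))}N^{r(1-\alpha(0)-\alpha(t_n))}$, so with this one substitution your plan goes through.
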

 \begin{proof}
 	The proof is exactly the same as that of Theorem 7 in \cite{WanZhe} with $\alpha_*$ in that theorem replaced by $\alpha(0)$ according to Theorem \ref{thm:u2}. 
 	\end{proof}
 	
We bound another two truncation terms for $1\leq n\leq N$
 \begin{equation}\label{r3r4}
 \ds r_{3,n}:=\delta_{\tau_n} \big(u(\bm x,t_n)- \Pi u(\bm x,t_n)\big),~~r_{4,n}:=\delta_{\tau}^{1-\alpha(t_n)}\big( u(\bm x,t_n)- \Pi u(\bm x,t_n)\big),
 \end{equation}
 for the convenience of the convergence estimates.
 \begin{theorem}\label{thmtrun}
 Suppose $u_0\in \check H^4$, $f\in H^1(0,T;\check H^2)$ and the Assumption A holds. Then the following estimates hold under the uniform temporal partition for the case of $\alpha(0)=1$, $\alpha'(0)=0$ and $\lim_{t\rightarrow 0^+}\alpha'(t)\ln t$ is finite 
 \begin{equation}\label{thmtrun:e1}
 \ds \|r_{3,n}\|_{\hat L_\infty(0,T;L_2)}+\|r_{4,n}\|_{\hat L_\infty(0,T;L_2)}\leq QQ_1h^2,~~Q_1:=\|u_0\|_{\check{H}^4}+\|f\|_{H^1(0,T;\check{H}^2)}
 \end{equation}
 and under the graded mesh otherwise
 \begin{equation}\label{thmtrun:e2}
  \begin{array}{l}
\ds\|r_{3,n}\|_{\hat L_\infty(0,T;L_2)}\leq QQ_1h^2\big(\delta_{n,1}+(1-\delta_{n,1})n^{r(\alpha(0)-1)}\big)N^{-r(\alpha(0)-1)},\\[0.15in]
\ds\|r_{4,n}\|_{\hat L_\infty(0,T;L_2)}\\[0.15in]
\ds\qquad\leq QQ_1h^2\big(\delta_{n,1}+(1-\delta_{n,1})n^{r(\alpha(0)+\alpha(t_n)-1)}\big)N^{-r(\alpha(0)+\alpha(t_n)-1)}.
\end{array}
 \end{equation}
 \end{theorem}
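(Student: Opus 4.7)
The plan is to set $\eta(\cdot, t) := u(\cdot, t) - \Pi u(\cdot, t)$, so that $r_{3,n} = \delta_{\tau_n}\eta(\cdot, t_n)$ and $r_{4,n} = \delta_\tau^{1-\alpha(t_n)}\eta(\cdot, t_n)$. Since $\bm K$ is independent of $t$, the Ritz projection $\Pi$ commutes with $\p_t$, giving $\eta_t = (I-\Pi) u_t$, and (\ref{ritz}) yields the pointwise-in-time estimate $\|\eta_t(\cdot, s)\|_{L_2} \le Q h^2 \|u_t(\cdot, s)\|_{H^2}$. The proof then reduces to inserting the regularity of $u_t$ supplied by Theorem \ref{thm:u2}: $\|u_t\|_{C([0,T]; H^2)} \le Q Q_1$ in the smooth regime, and $\|u_t(\cdot, s)\|_{H^2} \le Q Q_1 s^{\alpha(0)-1}$ otherwise.

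For $r_{3,n}$, I would rewrite $r_{3,n} = \tau_n^{-1}\int_{t_{n-1}}^{t_n}\eta_t(\cdot, s)\,ds$ and take $L_2$ norms. The smooth case gives (\ref{thmtrun:e1}) immediately, since the integrand is uniformly bounded by $Q Q_1 h^2$. In the singular case, inserting $\|\eta_t(\cdot, s)\|_{L_2} \le Q Q_1 h^2 s^{\alpha(0)-1}$ produces $\|r_{3,1}\|_{L_2} \le Q Q_1 h^2 t_1^{\alpha(0)-1}$ by explicit integration (using integrability at $0$ from $\alpha_m > 0$), and $\|r_{3,n}\|_{L_2} \le Q Q_1 h^2 t_{n-1}^{\alpha(0)-1}$ for $n \ge 2$ by monotonicity of $s \mapsto s^{\alpha(0)-1}$. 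Writing $t_n = T(n/N)^r$ and using $t_{n-1} \ge 2^{-r} t_n$ for $n \ge 2$ converts these into the stated $(n,N)$ powers in (\ref{thmtrun:e2}).

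For $r_{4,n}$, I would start from $r_{4,n} = \Gamma(\alpha(t_n))^{-1}\sum_{k=1}^n \delta_{\tau_k}\eta(\cdot, t_k)\int_{t_{k-1}}^{t_k}(t_n-t)^{\alpha(t_n)-1}\,dt$ and insert the $\|\delta_{\tau_k}\eta\|_{L_2}$ estimates from the previous step. The smooth case collapses to $\|r_{4,n}\|_{L_2} \le Q Q_1 h^2 t_n^{\alpha(t_n)}/\Gamma(1+\alpha(t_n)) \le Q Q_1 h^2$. For the singular case with $n \ge 2$, the key observation is $t_k \le 2^r t_{k-1}$ on the graded mesh, which gives $t_{k-1}^{\alpha(0)-1} \le C_r s^{\alpha(0)-1}$ for $s \in [t_{k-1}, t_k]$; the sum is then dominated by a Beta-type integral
\begin{equation*}
\frac{C_r Q Q_1 h^2}{\Gamma(\alpha(t_n))}\int_0^{t_n} s^{\alpha(0)-1}(t_n-s)^{\alpha(t_n)-1}\,ds = Q Q_1 h^2\, \frac{B(\alpha(0),\alpha(t_n))}{\Gamma(\alpha(t_n))}\, t_n^{\alpha(0)+\alpha(t_n)-1},
\end{equation*}
which equals $Q Q_1 h^2 t_n^{\alpha(0)+\alpha(t_n)-1}$ up to a constant depending on $r$ and $\alpha_m$. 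For $n = 1$ only the $k=1$ term survives, yielding $\|r_{4,1}\|_{L_2} \le Q Q_1 h^2 t_1^{\alpha(0)+\alpha(t_1)-1}$. Substituting $t_n = T(n/N)^r$ reproduces the $(n,N)$ factors in (\ref{thmtrun:e2}).

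The main obstacle will be the $k=1$ boundary contribution to $r_{4,n}$ in the singular regime: $\|\delta_{\tau_1}\eta(\cdot, t_1)\|_{L_2}$ carries the strongest singularity $t_1^{\alpha(0)-1}$ inherited from $u_t$, and it must be absorbed into the Beta-function bound via integrability of $s^{\alpha(0)-1}$ at $s=0$ (ensured by $\alpha_m > 0$) without inflating the constant beyond a dependence on $r$ and $\alpha_m$. The remainder of the argument is a bookkeeping exercise in translating $t_n$-powers into the $(n/N)^r$ format using (\ref{bndtau}).
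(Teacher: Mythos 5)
Your proposal is correct, and its skeleton --- reduce $r_{3,n}$ and $r_{4,n}$ to Ritz-projection errors of time increments via (\ref{ritz}), then insert $\|u_t(\cdot,t)\|_{H^2}\leq QQ_1$ in the smooth regime and $\|u_t(\cdot,t)\|_{H^2}\leq QQ_1t^{\alpha(0)-1}$ otherwise --- is exactly the paper's; the treatment of $r_{3,n}$ and of the whole uniform-mesh case coincides with the paper's essentially line by line. Where you genuinely diverge is the singular-case bound for $r_{4,n}$. The paper writes $\|r_{4,n}\|\leq QQ_1h^2\sum_{k=1}^{n}J_{n,k}$ with $J_{n,k}=b^n_k\int_{t_{k-1}}^{t_k}t^{\alpha(0)-1}dt$ and then performs a four-way discrete splitting: the boundary terms $J_{n,1}$ and $J_{n,n}$ are handled separately, and the interior sum is cut at $k=\lceil n/2\rceil$, freezing the factor $(t_n-t)^{\alpha(t_n)-1}$ on the early half and the factor $t_{k-1}^{\alpha(0)-1}$ on the late half before summing. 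You instead absorb the entire sum into the single Beta-type integral $\int_0^{t_n}s^{\alpha(0)-1}(t_n-s)^{\alpha(t_n)-1}ds=B(\alpha(0),\alpha(t_n))\,t_n^{\alpha(0)+\alpha(t_n)-1}$, using the graded-mesh quasi-uniformity $t_k\leq 2^r t_{k-1}$ for $k\geq 2$ to dominate $t_{k-1}^{\alpha(0)-1}$ pointwise by $2^{r(1-\alpha(0))}s^{\alpha(0)-1}$ on $[t_{k-1},t_k]$, and the monotonicity of $s^{\alpha(0)-1}$ on $[0,t_1]$ for the $k=1$ term. Your route is shorter, treats both endpoint singularities symmetrically in one stroke, and makes the constant explicit as $\Gamma(\alpha(0))/\Gamma(\alpha(0)+\alpha(t_n))$, which is uniformly bounded because $\alpha_m>0$; the paper's discrete splitting is more mechanical but transfers more directly to schemes whose quadrature weights are not exact integrals of $(t_n-t)^{\alpha(t_n)-1}$. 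Both arguments produce the same $(n/N)^{r(\alpha(0)+\alpha(t_n)-1)}$ factor, so the proposal is sound.
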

 \begin{proof}
 	When $\alpha(0)=1$, $\alpha'(0)=0$ and $\lim_{t\rightarrow 0^+}\alpha'(t)\ln t$ is finite, $u\in C^1\big([0,T];H^2(\Omega)\big)$ by Theorem \ref{thm:Wellpose} so a uniform partition of $[0,T]$ suffices. We apply (\ref{ritz}) to obtain
 	\begin{equation}
 	\begin{array}{rl}
 	   \ds \|r_{3,n}\|&\ds=\tau_n^{-1}\|(I-\Pi)(u^n-u^{n-1})\|\\[0.1in]
 	   &\ds\leq Qh^2\tau_n^{-1}\|u^n-u^{n-1}\|_{H^2}\leq Qh^2\|u\|_{W^1_\infty(0,T;H^2)}
 	\end{array}
 	\end{equation}
 	and
 	\begin{equation}\label{trun:e1}
 	\begin{array}{l}
 	\ds\|r_{4,n}\|=\frac{1}{\Gamma(1+\alpha(t_n))}\bigg\|\sum_{k=1}^{n}b^n_k(I-\Pi)(u^k-u^{k-1})\bigg\|\\[0.15in]
 	\ds~~~~~\,~~\leq Qh^2\sum_{k=1}^{n}b^n_k\|u^k-u^{k-1}\|_{H^2} \leq Qh^2\|u\|_{W^1_\infty(0,T;H^2)}\sum_{k=1}^{n}b^n_k\tau_k\\[0.15in]
 	\ds~~~\,~~~~\leq Qh^2\|u\|_{W^1_\infty(0,T;H^2)},
 	\end{array}
 	\end{equation}
 	where $I$ refers to the identity operator. Then an application of Theorem \ref{thm:Wellpose} leads to (\ref{thmtrun:e1}).
 	
 	For other cases, we only need to consider the case that $\alpha(0)<1$. The graded mesh with mesh grading $r$ will be used to capture the singularity of the solutions at the initial time. By (\ref{Wellpose:e2}) and the mean-value theorem we bound $r_{3,n}$ by
 		\begin{equation*}
 		\begin{array}{rl}
 		\ds \|r_{3,n}\|&\ds=\frac{1}{\tau_n}\|(I-\Pi)(u^n-u^{n-1})\|\\[0.15in]
 		&\hspace{-0.2in}\ds\leq \frac{Qh^2}{\tau_n}\|u^n-u^{n-1}\|_{H^2}= \frac{Qh^2}{\tau_n}\bigg\|\int_{t_{n-1}}^{t_n}u_tdt\bigg\|_{H^2}\leq \frac{QQ_1h^2}{\tau_n}\int_{t_{n-1}}^{t_n}t^{\alpha(0)-1}dt\\[0.15in]
 		&\hspace{-0.2in}\ds\leq \left\{
 		\begin{array}{l}
 		\ds QQ_1h^2t_1^{\alpha(0)-1}=\frac{QQ_1h^2}{N^{r(\alpha(0)-1)}},~n=1,\\[0.1in]
 		\ds QQ_1h^2t_{n-1}^{\alpha(0)-1}\leq \frac{QQ_1h^2(n-1)^{r(\alpha(0)-1)}}{N^{r(\alpha(0)-1)}},~n>1.
 		\end{array}
 		\right.
 		\end{array}
 		\end{equation*}
  	We remain to bound $r_{4,n}$, which requires a careful argument. From (\ref{trun:e1}) we have
  		\begin{equation*}%\label{trun:e2}
  		\begin{array}{rl}
  		\ds \|r_{4,n}\|&\ds=\frac{1}{\Gamma(1+\alpha(t_n))}\bigg\|\sum_{k=1}^{n}b^n_k(I-\Pi)(u^k-u^{k-1})\bigg\|\\[0.15in]
  	&\ds	\leq Qh^2\sum_{k=1}^{n}b^n_k\|u^k-u^{k-1}\|_{H^2} \leq Qh^2\sum_{k=1}^{n}b^n_k\bigg\|\int_{t_{k-1}}^{t_k}u_tdt\bigg\|_{H^2}\\[0.2in]
  		&\ds\leq QQ_1h^2\sum_{k=1}^{n}b^n_k\int_{t_{k-1}}^{t_k}t^{\alpha(0)-1}dt:=QQ_1h^2\sum_{k=1}^{n}J_{n,k}.
  		\end{array}
  		\end{equation*}
 	When $n=1$, $J_{1,1}$ can be bounded by
 	\begin{equation*}
 	J_{1,1}= \frac{b^1_1t_1^{\alpha(0)}}{\alpha(0)}=\frac{t_1^{\alpha(1)+\alpha(0)-1}}{\alpha(0)}\leq QN^{-r(\alpha(1)+\alpha(0)-1)}.
 	\end{equation*}
 	For $n>1$, we first bound $J_{n,1}$ and $J_{n,n}$ by (\ref{bndtau}) and the mean-value theorem
 	\begin{equation}\label{trun:e3}
 	\begin{array}{rl}
 	\ds|J_{n,1}|&\ds=\frac{b^n_1t_1^{\alpha(0)}}{\alpha(0)}=\frac{(t_n^{\alpha(t_n)}-(t_n-t_1)^{\alpha(t_n)})t_1^{\alpha(0)}}{\tau_1\alpha(0)}\\[0.15in]
 	&\ds\leq Q(t_n-t_1)^{\alpha(t_n)-1}t_1^{\alpha(0)}=Q\bigg(\frac{n^r-1}{N^r}\bigg)^{\alpha(t_n)-1}\frac{1}{N^{r\alpha(0)}}\\[0.15in]
 	&\ds\leq Q\frac{n^{r(\alpha(t_n)-1)}}{N^{r(\alpha(0)+\alpha(t_n)-1)}},
 	\end{array}
 	\end{equation}
 	\begin{equation}\label{trun:e4}
 	\begin{array}{l}
 	\ds |J_{n,n}|=\frac{b^n_n(t_n^{\alpha(0)}-t_{n-1}^{\alpha(0)})}{\alpha(0)}\leq Q\tau_n^{\alpha(t_n)-1}t_{n-1}^{\alpha(0)-1}\tau_n\\[0.15in]
 	\ds~~~~~~~\leq Q\frac{n^{r(\alpha(t_n)+\alpha(0)-1)-\alpha(t_n)}}{N^{r(\alpha(t_n)+\alpha(0)-1)}}.
 	\end{array}
 	\end{equation}
 	We remain to consider the case $n\geq 3$ since the estimates (\ref{trun:e3}) and (\ref{trun:e4}) have covered the case $n=2$. By (\ref{bndtau}) and the mean-value theorem we obtain
 	\begin{equation*}
 	\begin{array}{rl}
 	\ds \sum_{k=\lceil n/2\rceil+1 }^{n-1}J_{n,k}&\ds=\sum_{k=\lceil n/2\rceil+1 }^{n-1}\frac{\alpha(t_n)}{\tau_k}\int_{t_{k-1}}^{t_k}\frac{dt}{(t_n-t)^{1-\alpha(t_n)}}\int_{t_{k-1}}^{t_k}t^{\alpha(0)-1}dt\\[0.2in]
 	&\ds=\frac{\alpha(t_n)}{\alpha(0)}\sum_{k=\lceil n/2\rceil+1 }^{n-1}\frac{t_k^{\alpha(0)}-t_{k-1}^{\alpha(0)}}{\tau_k}\int_{t_{k-1}}^{t_k}\frac{dt}{(t_n-t)^{1-\alpha(t_n)}}\\[0.2in]
 &\ds \leq Q\sum_{k=\lceil n/2\rceil+1 }^{n-1}t_{k-1}^{\alpha(0)-1}\int_{t_{k-1}}^{t_k}\frac{dt}{(t_n-t)^{1-\alpha(t_n)}}\\[0.2in]
 &\ds\leq Qt_{n}^{\alpha(0)-1}\sum_{k=\lceil n/2\rceil+1 }^{n-1}\int_{t_{k-1}}^{t_k}\frac{dt}{(t_n-t)^{1-\alpha(t_n)}}\\[0.2in]
 	&\ds\leq Qt_{n}^{\alpha(0)-1}(t_n-t_{\lceil n/2\rceil})^{\alpha(t_n)}\leq Qt_n^{\alpha(0)+\alpha(t_n)-1}= Q\frac{n^{r(\alpha(0)+\alpha(t_n)-1)}}{N^{r(\alpha(0)+\alpha(t_n)-1)}},
 	\end{array}
 	\end{equation*}
 	and
 	\begin{equation*}
 	\begin{array}{rl}
 	\ds \sum^{\lceil n/2\rceil }_{k=2}J_{n,k}&\ds\leq Q\sum^{\lceil n/2\rceil }_{k=2}t_{k-1}^{\alpha(0)-1}(t_n-t_k)^{\alpha(t_n)-1}\tau_k\leq Qt_n^{\alpha(t_n)-1}\sum^{\lceil n/2\rceil }_{k=2}t_{k-1}^{\alpha(0)-1}\tau_k\\[0.2in]
 	&\ds \leq Q\frac{n^{r(\alpha(t_n)-1)}}{N^{r(\alpha(t_n)-1)}}\sum^{\lceil n/2\rceil }_{k=2}\frac{(k-1)^{r(\alpha(0)-1)}}{N^{r(\alpha(0)-1)}}\frac{k^{r-1}}{N^r}\\[0.2in]
 	&\ds \leq Q\frac{n^{r(\alpha(t_n)-1)}}{N^{r(\alpha(t_n)+\alpha(0)-1)}}\sum^{\lceil n/2\rceil }_{k=2}k^{r\alpha(0)-1}\leq Q \frac{n^{r(\alpha(t_n)+\alpha(0)-1)}}{N^{r(\alpha(t_n)+\alpha(0)-1)}}.
 	\end{array}
 	\end{equation*}
We summarize the above estimates to finish the proof.
  	\end{proof}
\subsection{Convergence estimates of the finite element approximations}
We prove the optimal error estimate of the fully discrete finite element method (\ref{FEM}) by the following theorem. 
\begin{theorem}\label{thm:conv}
Suppose that $u_0\in \check H^4$ and $f\in H^1(0,T;\check{H}^2)\cap H^2(0,T;L_2)$. We set $r=1$ for the case of $\alpha(0)=1$,  $\alpha'(0)=0$ and $\lim_{t\rightarrow 0^+}\alpha'(t)\ln t$ is finite and $r=2/\alpha(0)$ otherwise. Then the following optimal order error estimate holds 
\begin{equation*}
\begin{array}{l}
\ds \| u_h - u\|_{\hat L_\infty(0,T;L_2)}\leq Q\big(\|u_0\|_{\check{H}^4}+\|f\|_{H^1(0,T;\check{H}^2)}+\|f\|_{H^2(0,T;L_2)}\big)\big(N^{-1}+h^2\big).
\end{array}
\end{equation*}
Here $Q=Q(T,\alpha_m,\|k\|_{C[0,T]})$.
\end{theorem}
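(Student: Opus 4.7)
The plan is to split the total error as $u_h^n - u^n = e^n + (\Pi u^n - u^n)$ with $e^n := u_h^n - \Pi u^n \in S_h$. The projection piece is bounded directly: by (\ref{ritz}) together with the $H^2$-regularity of $u(\cdot,t_n)$ supplied by Theorem \ref{thm:Wellpose}, one has $\|\Pi u^n - u^n\| \le Q Q_0 h^2$ uniformly in $n$. It therefore suffices to prove $\max_{1\le n\le N}\|e^n\|\le Q Q_0 (N^{-1}+h^2)$.

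Subtracting (\ref{FEM}) from the reference weak identity (\ref{ref}), inserting $\pm \Pi u^n$ in each term and invoking the Ritz orthogonality (\ref{Ritz}) to kill the stiffness contribution from $\Pi u^n - u^n$, yields the error equation
\begin{equation*}
(\delta_{\tau_n} e^n,\chi) + \big(\bm K\nabla e^n,\nabla\chi\big) + k(t_n)(\delta_\tau^{1-\alpha(t_n)} e^n,\chi) = (\mathcal R^n,\chi), \quad \chi\in S_h,
\end{equation*}
with $e^0=0$ and $\mathcal R^n := r_{1,n}+k(t_n)r_{2,n}-r_{3,n}-k(t_n)r_{4,n}$, the $r_{i,n}$ being the truncation terms from (\ref{Disc:e1})--(\ref{Disc:e2}) and (\ref{r3r4}). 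Testing with $\chi=e^n$ and invoking the elementary inequality $2\tau_n(\delta_{\tau_n}e^n,e^n)\ge \|e^n\|^2-\|e^{n-1}\|^2$ together with the coercivity $(\bm K\nabla e^n,\nabla e^n)\ge 0$ from Assumption A gives
\begin{equation*}
\|e^n\|^2-\|e^{n-1}\|^2 + 2\tau_n k(t_n)(\delta_\tau^{1-\alpha(t_n)} e^n,e^n) \le 2\tau_n\|\mathcal R^n\|\,\|e^n\|.
\end{equation*}

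The core analytic step is a discrete fractional Gr\"onwall lemma for the variable-order kernel. Expanding $\delta_\tau^{1-\alpha(t_n)} e^n$ through (\ref{Disc:e2}) and summing by parts, together with the monotonicity and positivity of $b^n_k$ from (\ref{bn}), produces a lower bound of the form $(\delta_\tau^{1-\alpha(t_n)} e^n,e^n)\ge c\,b^n_n\|e^n\|^2 - \sum_{k=1}^{n-1}(b^n_{k+1}-b^n_k)|(e^n,e^k)|$ with constants depending only on $\alpha_m$. Since $\tau_n b^n_n = \tau_n^{\alpha(t_n)}$ and the increments $b^n_{k+1}-b^n_k$ form a fractional-kernel structure identical to the one analyzed in \cite{StyOriGra}, a standard induction (uniform in $n$ because $\alpha(t_n)\ge \alpha_m>0$) delivers
\begin{equation*}
\max_{1\le n\le N}\|e^n\| \le Q\sum_{n=1}^N \tau_n\|\mathcal R^n\|.
\end{equation*}

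To conclude I would substitute the truncation bounds from Theorems \ref{thm:Local} and \ref{thmtrun}. In the regular case $r=1$ and each $\|\mathcal R^n\|\le Q Q_0(N^{-1}+h^2)$ uniformly in $n$, so the stated estimate follows immediately. In the singular case the per-step bounds in (\ref{thmLocal:e2}) and (\ref{thmtrun:e2}) may blow up as $n\to 1$, but the weight $\tau_n\sim n^{r-1}/N^r$ is then small enough to compensate: for the leading term, $\tau_n\cdot n^{-1-r(1-\alpha(0))}N^{r(1-\alpha(0))}\simeq n^{r-2-r(1-\alpha(0))}N^{-r\alpha(0)} = n^{0}\, N^{-2}$ after choosing $r=2/\alpha(0)$, so that $\sum_n \tau_n\|\mathcal R^n\|$ collapses to $O(N^{-1}+h^2)$; the $r_{2,n},r_{3,n},r_{4,n}$ contributions are handled analogously using $\alpha(t_n)\ge \alpha_m$. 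The main obstacles I expect are (i) proving the variable-order discrete fractional Gr\"onwall inequality uniformly in $n$ despite the $t_n$-dependence of the exponent (feasible because $\alpha_m>0$ freezes all constants), and (ii) the meticulous algebraic verification that every summand in $\sum_n \tau_n\|\mathcal R^n\|$ genuinely telescopes to $O(N^{-1})$ under the choice $r=2/\alpha(0)$.
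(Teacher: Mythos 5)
Your proposal is correct and follows essentially the same route as the paper: the same splitting $u_h^n-u^n=(u_h^n-\Pi u^n)+(\Pi u^n-u^n)$, the same error equation tested with the discrete error, the same use of the monotonicity of the coefficients $b^n_k$ in (\ref{bn}) to close a discrete Gr\"onwall/induction argument, and the same accumulation $\sum_n\tau_n\|\mathcal R^n\|=O(N^{-1}+h^2)$ under $r=2/\alpha(0)$ (the paper phrases this as a uniform per-step bound $\tau_n\sum_i\|r_{i,n}\|\le Q'Q_0\tau_N(N^{-1}+h^2)$ fed into an explicit induction $\|\xi^m\|\le mQ'Q_0\tau_N(N^{-1}+h^2)$). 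The only cosmetic difference is that the paper works with the first power of the norm by applying Cauchy--Schwarz to $(\xi^{n-1},\xi^n)$ and dividing by $\|\xi^n\|$, rather than with the squared energy identity you invoke.
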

\begin{proof}
	We split the error by $e^n:=u_h^n-u^n=\xi^n+\eta^n$ where $\xi^n:=u_h^n-\Pi u^n$ and $\eta^n:=\Pi u^n-u^n$. The estimate of $\eta^n$ is given by (\ref{ritz}) so we remain to bound $\xi^n$. We subtract (\ref{FEM}) from (\ref{ref}) with $\chi=\xi^n$ and  apply (\ref{Ritz}) and (\ref{r3r4})	into the resulting equation
	to obtain the following error equation in terms of $\xi^n$
	\begin{equation}\label{conv:e1}
	\begin{array}{rl}
	\ds (\delta_{\tau_n} \xi^n,\xi^n)+\big(\bm K\nabla \xi^n,\nabla \xi^n\big)&\ds= -k(t_n)(\delta_{\tau}^{1-\alpha(t_n)} \xi^n,\xi^n)\\[0.1in]
	&\ds- \big(k(t_n)(r_{2,n}-r_{4,n})+r_{1,n}+r_{3,n},\xi^n\big).
	\end{array}
	\end{equation}	
	We rearrange $\delta_{\tau}^{1-\alpha(t_n)}$ by
	$$\ds \delta_{\tau}^{1-\alpha(t_n)} \xi^n = \frac{1}{\Gamma(1+\alpha(t_n))} \Big [b_n^n \xi^n - \sum_{k=1}^{n-1} \big ( b^n_{k+1}-b_{k}^n\big) \xi^k - b_{1}^n \xi^0 \Big ]$$ 
	and apply $u_h^0:=\Pi u^0$ to reformulate (\ref{conv:e1}) as
	$$\begin{array}{l}
	\ds ( \xi^n,\xi^n)+\tau_n\big(\bm K\nabla \xi^n,\nabla \xi^n\big)+\frac{\tau_nk(t_n)b^n_n}{\Gamma(1+\alpha(t_n))} ( \xi^n,\xi^n)\\[0.1in]
	\ds\qquad=( \xi^{n-1},\xi^n) +\frac{\tau_nk(t_n)}{\Gamma(1+\alpha(t_n))}\sum_{k=1}^{n-1} \big ( b^n_{k+1}-b_{k}^n\big) (\xi^k,\xi^n)\\[0.1in]
	\ds\qquad\quad-\tau_n \big(k(t_n)(r_{2,n}-r_{4,n})+r_{1,n}+r_{3,n},\xi^n\big),
	\end{array}$$
	from which we use (\ref{bn}) to obtain
		\begin{equation}\label{conv:e2}
		\ds (1+A_n\tau_n^{\alpha(t_n)})\|\xi^n\|\leq \|\xi^{n-1}\|+A_n\tau_n  \sum_{k=1}^{n-1} \big (b_{k+1}^n - b^n_{k}\big) \|\xi^{k}\|+Q\tau_n\sum_{i=1}^4 \|r_{i,n}\|,
		\end{equation}
	where $A_n:=k(t_n)/\Gamma(1+\alpha(t_n))$. 
	
	We turn to evaluate the truncation error terms on the right-hand side of (\ref{conv:e2}). In the case $\alpha(0)=1$, $\alpha'(0)=0$ and $\lim_{t\rightarrow 0^+}\alpha'(t)\ln t$ is finite, the uniform partition is applied and by (\ref{thmLocal:e1}) and (\ref{thmtrun:e1}) we directly obtain 
	\begin{equation*}%\label{conv:e3}
	Q\tau\sum_{i=1}^4 \|r_{i,n}\|\leq QQ_0\tau \big(N^{-1}+h^2\big),%~Q_0:=\big(\|u_0\|_{\check{H}^4}+\|f\|_{H^1(0,T;\check{H}^2)}+\|f\|_{H^2(0,T;L_2)}\big).
	\end{equation*}
with $Q_0$ defined in (\ref{thmLocal:e1}). Otherwise, the graded mesh with $r=2/\alpha(0)$ is chosen. We use (\ref{thmLocal:e2}), (\ref{thmtrun:e2}) and the fact that the bound of $r_{3,n}$ dominates that of $r_{4,n}$ (see Theorem \ref{thmtrun}) to obtain
	$$\begin{array}{rl}
	\ds Q\tau_n ( \|r_{2,n}\|+\|r_{1,n}\|)  & \ds \le \frac{QQ_0n^{r-1}}{N^r} \bigg \{ \frac{(1-\delta_{n,1})}{n}  \Big (\frac{N}{n} \Big)^{r(1-\alpha(0) - \alpha(t_n))} \\[0.1in]
	&\ds \qquad +  \delta_{n,1}N^{r(1-\alpha(0)-\alpha(t_1))} + \f1{n} \Big(\f{N}{n}\Big)^{r(1-\alpha(0))} \bigg\}\\[0.1in] 
	& \ds \hspace{-0.4in}\le QQ_0\Big(\frac{(1-\delta_{n,1}) n^{r\alpha(t_n)}}{N^{2+r\alpha(t_n)}} + \f{ \delta_{n,1}n^{r-1}}{N^{2 + r\alpha(1)}} +  \f{1}{N^2}\Big) \le \frac{QQ_0}{N^2}\leq \frac{QQ_0\tau_N}{N},
	\end{array}$$ 
	and
	\begin{equation*}
	\begin{array}{l}
	\ds  Q\tau_n ( \|r_{3,n}\|+\|r_{4,n}\|) \leq QQ_0h^2\frac{n^{r-1}}{N^r}\bigg(\frac{\delta_{n,1}+(1-\delta_{n,1})n^{r(\alpha(0)-1)}}{N^{r(\alpha(0)-1)}}\bigg)\\[0.15in]
	\hspace{0.2in}\ds\leq QQ_0h^2\big(\delta_{n,1}n^{r-1}+(1-\delta_{n,1})n^{r\alpha(0)-1}\big)N^{-r\alpha(0)}\leq QQ_0h^2N^{-1}\leq QQ_0h^2\tau_N.
	\end{array}
	\end{equation*}
	Therefore, in any case of $\alpha(t)$, we obtain the following estimates of the truncation errors under the appropriate temporal partition
	\begin{equation}\label{trunbnd}
	Q\tau_n\sum_{i=1}^4 \|r_{i,n}\|\leq Q'Q_0\tau_N \big(N^{-1}+h^2\big),
	\end{equation}
	 for some fixed constant $Q'$. We then prove the convergence estimates by mathematical induction.
Applying (\ref{trunbnd}) to (\ref{conv:e2}) with $n=1$ yields
	\begin{equation*}
    \|\xi^1\|\leq Q'Q_0\tau_N \big(N^{-1}+h^2\big).
	\end{equation*}
 Assume 
	\begin{equation}\label{induc:e1}
	\ds \|\xi^{m}\|\leq m\,Q'Q_0\tau_N \big(N^{-1}+h^2\big),~~2\leq m\leq n-1.
	\end{equation} 
	Plugging (\ref{trunbnd}) and (\ref{induc:e1}) with $2\leq m\leq n-1$ into (\ref{conv:e2}) leads to
		\begin{equation*}%\label{induc:e2}
		\begin{array}{rl}
		\ds (1+A_n\tau_n^{\alpha(t_n)})\|\xi^n\|&\ds\leq \|\xi^{n-1}\|+A_n\tau_n  \sum_{k=1}^{n-1} \big (b_{k+1}^n - b^n_{k}\big) \|\xi^{k}\|+Q\tau_n\sum_{i=1}^4 \|r_{i,n}\|\\[0.05in]
		&\ds\leq \Big[(n-1)\Big(1+A_n\tau_n  \sum_{k=1}^{n-1} \big (b_{k+1}^n - b^n_{k}\big) \Big)+1\Big]Q'Q_0\tau_N \big(N^{-1}+h^2\big)\\[0.05in]
		&\ds\leq [(n-1)(1+A_n\tau_n^{\alpha(t_n)})+1]Q'Q_0\tau_N \big(N^{-1}+h^2\big),
		\end{array}
		\end{equation*}	
in which we divide $1+A_n\tau_n^{\alpha(t_n)}$ on both sides to obtain (\ref{induc:e1}) for $m=n$ and thus for any $m\geq 2$ by mathematical induction. Then the proof is finished by applying $m\tau_N\leq rTm/N\leq rT$ into (\ref{induc:e1}). 	
\end{proof}

\section{Numerical experiments} 
We substantiate the analysis numerically by investigating the impact of $\alpha(t)$ on the convergence rate of the fully discrete finite element scheme (\ref{FEM}). 

Let $(\bm x,t)\in (0,1)^3\times[0,1]$, $k(t)=1$, $\bm K=\text{diag}(0.001,0.001,0.001)$ and
$$u=t^{\alpha(t)}\sin(2\pi x)\sin(2\pi y)\sin(2\pi z)$$
with
 $$\alpha(t) = \alpha(1) + (\alpha(0) - \alpha(1))\Big((1-t) - \frac{\sin(2\pi (1-t))}{2\pi} \Big),$$
	which satisfies $\alpha'(0)=0$ and $\lim_{t\rightarrow 0^+}\alpha'(t)\ln t=0$ and $f$ evaluated accordingly. We measure the convergence rates $\kappa$ and $\gamma$ such that
	\begin{equation*}%\label{error}
	\| u-u_h \|_{\hat L_\infty(0,T;L_2(\Omega))} \leq Q (N^{-\kappa}+h^{\gamma}).
	\end{equation*}
	 We select the uniform partition on space domain and the uniform temporal mesh is used for the case of $\alpha(0)=1$ (i.e., $u_{tt}$ is continuous on $[0,T]$). For the case of $\alpha(0)<1$ (i.e., the solutions exhibit singularity at $t=0$), both uniform and graded meshes with $r=2/\alpha(0)$ for time are applied. %We use the three space-dimensional analogue of the above example on $(\bm x,t)\in [0,1]^3\times [0,1]$ with $\bm D=\mbox{diag}[0.001,0.001,0.001]$, $k(t)=0.1+0.1t^2$ and $u(\bm x,t)=t^{\alpha(t)}\sin(\pi x_1)\sin(\pi x_2)\sin(\pi x_3)$.
	  We present results in Table \ref{table:0} and \ref{table:1}, which reveal that the scheme (\ref{FEM}) with a uniform mesh has an optimal-order convergence rate for the case of smooth solutions (i.e., $\alpha(0)=1$), but only a sub-optimal order for the case of $\alpha(0)<1$. Instead, the scheme (\ref{FEM}) with the temporal graded mesh of $r = 2/\alpha(0)$ achieves an optimal-order convergence rate. These results coincide with Theorem \ref{thm:conv}.
	
	%\vspace{-0.1in}
	\begin{table}[H]
		\setlength{\abovecaptionskip}{0pt}
		\centering
		\caption{Convergence rates under $\alpha(1)=0.4$ and $\alpha(0)=1$ or $0.6$.}
		\vspace{0.5em}	
		{\footnotesize \begin{tabular}{ccccccc|ccc}
			\hline
			&Uniform&&Graded&&Uniform&&&Uniform&\\
			\cline{1-10}
			&$\alpha(0)=0.6$&  &$\alpha(0)=0.6$&&$\alpha(0)=1$ &&&$\alpha(0)=1$& \\
			\cline{1-10}
			$N$&$h=1/32$& $\kappa$ &$h=1/32$&$\kappa$&$h=1/32$ &$\kappa$&$h$&$N=1/h^2$&$\gamma$ \\
			\cline{1-10}		
			1/8&    3.26E-02&	    &	3.54E-02&	    &	1.00E-02&	    & 	1/8 & 	1.60E-03&      \\	
			1/16&	2.09E-02&	0.65&	1.87E-02&	0.92&	4.90E-03&	1.03&	1/16&	3.97E-04&	2.01\\
			1/32&	1.34E-02&	0.63&	9.60E-03&	0.97&	2.41E-03&	1.03&	1/24&	1.76E-04&	2.01\\
			1/64&	8.74E-03&	0.62&	4.85E-03&	0.98&	1.20E-03&	1.01&	1/32&	9.89E-05&	2.00\\
			
			\hline		
			\end{tabular}}
			\label{table:0}
			\end{table}
			
			\begin{table}[H]
				\setlength{\abovecaptionskip}{0pt}
				\centering
				\caption{Convergence rates under $\alpha(1)=0.6$ and $\alpha(0)=1$ or $0.8$.}
				\vspace{0.5em}	
				{\footnotesize \begin{tabular}{ccccccc|ccc}
					\hline
					&Uniform&&Graded&&Uniform&&&Uniform&\\
					\cline{1-10}
					&$\alpha(0)=0.8$&  &$\alpha(0)=0.8$&&$\alpha(0)=1$ &&&$\alpha(0)=1$& \\
					\cline{1-10}
					$N$&$h=1/32$& $\kappa$ &$h=1/32$&$\kappa$&$h=1/32$ &$\kappa$&$h$&$N=1/h^2$&$\gamma$ \\
					\cline{1-10}		
					1/8&    1.37E-02&		&    1.72E-02&	&    	6.65E-03&		&    1/8&   1.11E-03&\\	
					1/16&	7.98E-03&	0.78&	9.03E-03&	0.93&	3.34E-03&	0.99&	1/16&	2.76E-04&	2.00\\
					1/32&	4.65E-03&	0.78&	4.63E-03&	0.96&	1.68E-03&	0.99&	1/24&	1.22E-04&	2.00\\
					1/64&	2.72E-03&	0.77&	2.36E-03&	0.97&	8.60E-04&	0.97&	1/32&	6.89E-05&	2.00\\
					\hline		
				\end{tabular} }
				\label{table:1}
			\end{table}

\section{Appendix: Wellposedness of the variable-order tFDE and regularity of its solutions}
It is known \cite{CouHil,Eva} that the eigenfunctions $\{\phi_i \}_{i=1}^\infty$ of the Sturm-Liouville problem 
\begin{equation*}%\label{EigenValue}
\mathcal{L}\phi(\bm x) = \lambda_i \phi_i(\xx), ~\xx \in \Omega; \quad \phi_i(\xx) = 0, ~\xx \in \p \om
\end{equation*}
form an orthonormal basis in $L_2(\om)$. The eigenvalues $\{\lambda_i\}_{i=1}^\infty$ are positive and form a nondecreasing sequence that tend to $\infty$ with $i$. We use the theory of sectorial operators to define the fractional Sobolev spaces \cite{SakYam,Tho} 
%\begin{equation}\label{Sect}
$$\begin{array}{c}
%\ds \hspace{-0.1in} {\cal L}^\gamma v := \sum_{i=1}^\infty \lambda_i^\gamma (v,\phi_i) \phi_i, \quad \mathrm{for~any} ~~v = \sum_{i=1}^\infty (v,\phi_i) \phi_i, \\[0.15in] 
\ds \check{H}^{\gamma}(\om) := \Big \{v \in L_2(\om): | v |_{\check{H}^\gamma}^2 :=  \sum_{i=1}^{\infty} \lambda_i^{\gamma} (v,\phi_i)^2 < \infty \Big \},
\end{array}$$  %\end{equation}
with the norm being defined by $\| v \|_{\check{H}^\gamma} := \big ( \|v\|^2 + | v |_{\check{H}^\gamma}^2 \big)^{1/2}$. Note that $\check{H}^\gamma(\om)$
is a subspace of the fractional Sobolev space $H^\gamma(\om)$ characterized by \cite{AdaFou,SakYam,Tho}
$$
\check{H}^\gamma(\om) = \big \{v \in H^\gamma(\Omega): ~\mathcal L^s v(\xx) = 0, ~\xx \in \p \om,~~ s < \gamma/2  \big \}
$$
and the seminorms $| v |_{\check{H}^\gamma}$ and $| v |_{{H}^\gamma}$ are equivalent in $\check{H}^\gamma$.

Then using the approaches of variable seperation, the wellposedness of the model (\ref{ModelR}) and the regularity estimates of its solutions $u(x)$ are proved by the following theorems \cite{WanZheJMAA}.
\begin{theorem}\label{thm:Wellpose}
	Suppose that $u_0 \in \check{H}^{\gamma + 2}$, $f\in H^1(0,T;\check{H}^\gamma)$ for some $d/2 < \gamma \in \mathbb{R}^+$ and Assumption A holds. If $ \alpha(0) = 1$ then problem (\ref{ModelR}) has a unique solution $u \in C^1\big([0,T];\check H^\gamma \big)$ with the following stability estimates for any $0\leq s \leq\gamma$
	\begin{equation*}\begin{array}{rl}
	\|u\|_{C([0,T];\check H^s)} &\ds  \leq Q \big(\|u_0\|_{\check H^s}+\|f\|_{L_2(0,T;\check H^s)}\big), \\[0.075in]
	\|u\|_{C^1([0,T];\check H^s)} & \ds \le Q \big(\|u_0\|_{\check H^{2+s}} + \|f\|_{H^1(0,T;\check H^s)}\big).
	\end{array}\end{equation*}
	If $\alpha(0) < 1$ problem (\ref{ModelR}) has a unique solution $u \in C\big([0,T];\check H^\gamma \big) \cap C^1\big((0,T];\check H^\gamma \big) $ with the stability estimate 
	\begin{equation}\label{Wellpose:e2}\begin{array}{rl}
	\hspace{-0.125in} \|u\|_{C([0,T];\check H^s)} & \ds  \le Q \big(\|u_0\|_{\check H^{2+s}} + \|f\|_{H^1(0,T;\check H^s)}\big), \\[0.1in]
	\hspace{-0.125in} \|u\|_{C^1([\varepsilon,T];\check H^s)} & \ds  \le Q \varepsilon^{\alpha(0)-1} \big(\|u_0\|_{\check H^{2+s}} + \|f\|_{H^1(0,T;\check H^s)}\big),~~0 < \varepsilon \ll 1.
	\end{array}\end{equation}
    Here $Q = Q\big(\alpha_m, \| k\|_{C[0,T]},T\big)$.
\end{theorem}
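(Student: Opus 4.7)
The plan is to exploit the orthonormal eigenbasis $\{\phi_i\}_{i=1}^\infty$ of $\mathcal L$ (mentioned at the start of the appendix) to decouple the PDE (\ref{ModelR}) into a family of scalar variable-order fractional ODEs, solve each one, and then reassemble the solution via the series characterization of $\check H^s$. Expanding $u(\xx,t) = \sum_{i=1}^\infty u_i(t)\phi_i(\xx)$, $u_0 = \sum_i u_{0,i}\phi_i$, $f(\cdot,t) = \sum_i f_i(t)\phi_i$ with $u_{0,i} := (u_0,\phi_i)$ and $f_i(t) := (f(\cdot,t),\phi_i)$, and working with the equivalent Caputo form (\ref{ModelC}) via Lemma \ref{lem:R-C}, one obtains the decoupled scalar IVPs
\begin{equation*}
u_i'(t) + k(t)\,{}_0^CD_t^{1-\alpha(t)} u_i(t) + \lambda_i u_i(t) = f_i(t) - \frac{k(t)u_{0,i}t^{\alpha(t)-1}}{\Gamma(\alpha(t))},\quad u_i(0)=u_{0,i}.
\end{equation*}

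Next I would establish wellposedness of each scalar IVP. Writing ${}_0^CD_t^{1-\alpha(t)}u_i(t) = {}_0I_t^{\alpha(t)}u_i'(t)$, the equation is recast as a Volterra integral equation for $u_i'$ with kernel proportional to $(t-s)^{\alpha(t)-1}/\Gamma(\alpha(t))$. A contraction mapping argument on $C([0,\delta])$ for some $\delta$ depending only on $\alpha_m$ and $\|k\|_{C[0,T]}$ yields local existence and uniqueness; an \emph{a priori} bound independent of $\delta$ then extends the solution to all of $[0,T]$ by standard continuation. Positivity of $k_m$ and $\lambda_i$ drives an energy-type estimate
\begin{equation*}
|u_i(t)|^2 \le Q\bigl(|u_{0,i}|^2 + \|f_i\|_{L_2(0,T)}^2\bigr),
\end{equation*}
and differentiating the equation in $t$ and estimating the variable-order terms gives
\begin{equation*}
|u_i'(t)|^2 \le Q\,\varphi(t)^2\bigl(\lambda_i^2|u_{0,i}|^2 + \|f_i\|_{H^1(0,T)}^2\bigr),
\end{equation*}
with $\varphi(t)\equiv 1$ when $\alpha(0)=1$ (the hypothesis $\lim_{t\to 0^+}(\alpha(t)-\alpha(0))\ln t = 0$ forces $t^{\alpha(t)-1}\to 1$ continuously, so the forcing belongs to $C([0,T])$), and $\varphi(t) = t^{\alpha(0)-1}$ otherwise.

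Finally, multiplying each squared estimate by $\lambda_i^s$, summing over $i$, and using $\sum_i \lambda_i^s u_i(t)^2 = |u(\cdot,t)|_{\check H^s}^2$ together with $\sum_i \lambda_i^{s+2}|u_{0,i}|^2 = |u_0|_{\check H^{s+2}}^2$ produces the claimed $C([0,T];\check H^s)$ and $C^1([0,T];\check H^s)$ or $C^1([\varepsilon,T];\check H^s)$ bounds (the last one picking up the factor $\varepsilon^{\alpha(0)-1} = \varphi(\varepsilon)$). The hypothesis $\gamma > d/2$ ensures absolute convergence of the series for $u$ in $\check H^\gamma$ and legitimizes term-by-term differentiation; continuity in $t$ for $u$ and $u'$ follows from dominated convergence using the uniform-in-$i$ estimates. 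Uniqueness is inherited from uniqueness of each $u_i$ via the completeness of $\{\phi_i\}$.

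The main obstacle is obtaining the sharp $t^{\alpha(0)-1}$ blow-up of $u_i'$ in the case $\alpha(0) < 1$ with constants independent of $\lambda_i$. The variable-order kernel $(t-s)^{\alpha(t)-1}/\Gamma(\alpha(t))$ couples nontrivially with the singular source $u_{0,i} t^{\alpha(t)-1}/\Gamma(\alpha(t))$ inherited from Lemma \ref{lem:R-C}, and the usual constant-order Mittag-Leffler/Grönwall machinery does not apply directly; a Grönwall-type inequality tailored to variable-order Volterra kernels, together with a careful accounting of how $\alpha(t)-\alpha(0)$ behaves near $t=0$, is required. Keeping all constants $\lambda_i$-free so that summation produces the $\check H^s$ norms on the right-hand side is the other delicate ingredient.
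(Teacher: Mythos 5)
Your overall strategy --- expansion in the eigenbasis $\{\phi_i\}$ of $\mathcal L$, reduction to decoupled scalar variable-order fractional ODEs, $\lambda_i$-uniform estimates, and resummation in $\check H^s$ --- is exactly the ``variable separation'' route that the paper attributes to \cite{WanZheJMAA}; the paper itself gives no proof of this theorem beyond that pointer. So in outline you are on the intended path, and the closing step (multiply the squared scalar bounds by $\lambda_i^s$, sum, identify the $\check H^s$ and $\check H^{s+2}$ norms) is the right bookkeeping.

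The proposal nevertheless has a genuine gap, located precisely at the step that constitutes the substance of the theorem, and you flag it yourself: the $\lambda_i$-uniform bound $|u_i'(t)|\le Q\,\varphi(t)\big(\lambda_i|u_{0,i}|+\|f_i\|_{H^1(0,T)}\big)$ with $\varphi(t)=t^{\alpha(0)-1}$ is asserted, not derived, and everything hard in the theorem lives there. Two of your intermediate claims are also doubtful as stated. First, the contraction argument for $u_i'$ on $C([0,\delta])$ cannot work when $\alpha(0)<1$: the source term $-k(t)u_{0,i}t^{\alpha(t)-1}/\Gamma(\alpha(t))$ coming from Lemma \ref{lem:R-C} is unbounded at $t=0$, so $u_i'$ is not continuous at the origin and the fixed point must be sought in a weighted space such as $\{g:\,t^{1-\alpha(0)}g\in C[0,\delta]\}$, with the contraction constant controlled uniformly in $\lambda_i$ (or the $\lambda_i u_i$ term moved to the data side after the zeroth-order estimate). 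Second, the energy estimate $|u_i(t)|^2\le Q\big(|u_{0,i}|^2+\|f_i\|_{L_2(0,T)}^2\big)$ cannot hold in both cases: the theorem itself claims the $\check H^s$-data bound only when $\alpha(0)=1$, while for $\alpha(0)<1$ even the $C([0,T];\check H^s)$ estimate needs $\|u_0\|_{\check H^{2+s}}$ and $\|f\|_{H^1(0,T;\check H^s)}$; moreover the positivity you invoke for the term $k(t)\,{}_0^RD_t^{1-\alpha(t)}u_i\cdot u_i$ is a known coercivity fact for constant order but is not available off the shelf for a time-dependent $\alpha(t)$, and the behavior of $\alpha(t)-\alpha(0)$ near $t=0$ (the hypothesis $\lim_{t\to0^+}(\alpha(t)-\alpha(0))\ln t=0$) must enter quantitatively. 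Until the scalar analysis --- existence in the correct weighted space, a Gr\"onwall inequality adapted to the variable-order Volterra kernel, and the sharp $t^{\alpha(0)-1}$ decay with constants depending only on $\alpha_m$, $\|k\|_{C[0,T]}$ and $T$ --- is actually carried out, the proposal is a program rather than a proof.
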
		

\begin{theorem}\label{thm:u2}
	Suppose that $u_0\in \check{H}^{s+4}$, $f \in H^1(0,T;\check{H}^{s+2})\cap H^2(0,T;\check{H}^s)$ for $s\geq 0$, and that $\alpha, k \in C^1[0,T]$ and (\ref{alpha}) holds. Then the following conclusions hold:
	\paragraph{Case 1.} If $\alpha(0) = 1$, $\alpha'(0)=0$ and $\lim_{t\rightarrow 0^+}\alpha'(t)\ln t$ is finite, then $u\in C^2([0,T];$ $\check H^s)$ and 
	\begin{equation*}%\label{u2a}
	\|u\|_{C^2([0,T];\check H^s)}\leq Q \big(\|u_0\|_{\check{H}^{s+4}}+\|f\|_{H^1(0,T;\check{H}^{s+2})}+\|f\|_{H^2(0,T;\check{H}^s)}\big).
	\end{equation*}
	\paragraph{Case 2.} If $\alpha(0) = 1$ but $\alpha'(0)\neq 0$ or $\lim_{t\rightarrow 0^+}\alpha'(t)\ln t$ is not finite, then $u\in C^1([0,T];\check H^s)\cap C^2((0,T];\check H^s)$ and for any $0 < \varepsilon\ll 1$
	\begin{equation*}%\label{u2b}
	\|u\|_{C^2([\varepsilon,T];\check H^s)}\leq K|\ln\varepsilon| \big(\|u_0\|_{\check{H}^{s+4}}+\|f\|_{H^1(0,T;\check{H}^{s+2})}+\|f\|_{H^2(0,T;\check{H}^s)}\big).
	\end{equation*}
	\paragraph{Case 3.} If $\alpha(0) < 1$, then $u\in C^2((0,T];\check H^s(\Omega))$ and for any $0 < \varepsilon\ll 1$
	\begin{equation*}%\label{u2c}
	\|u\|_{C^2([\varepsilon,T];\check H^s)}\leq Q\varepsilon^{\alpha(0)-2} \big(\|u_0\|_{\check{H}^{s+4}}+\|f\|_{H^1(0,T;\check{H}^{s+2})}+\|f\|_{H^2([0,T];\check{H}^s)}\big).
	\end{equation*}
	Here $Q = Q\big(\alpha_m, \| k\|_{C^1[0,T]},T\big)$.

\end{theorem}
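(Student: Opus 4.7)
}
The plan is to reduce the PDE \eqref{ModelR} to a family of scalar fractional ODEs by separation of variables against the eigenbasis $\{\phi_i\}_{i=1}^\infty$, derive second-order-in-time estimates for the scalar coefficients, and then reassemble via the $\check H^s$ norm. Writing $u(\xx,t)=\sum_i u_i(t)\phi_i(\xx)$, $f(\xx,t)=\sum_i f_i(t)\phi_i(\xx)$, $u_0(\xx)=\sum_i u_{0,i}\phi_i(\xx)$ and using Lemma \ref{lem:R-C}, each mode satisfies
\begin{equation*}
u_i'(t)+k(t)\,{}_0^{C}D_t^{1-\alpha(t)}u_i(t)+\lambda_i u_i(t)=f_i(t)-\frac{k(t)\,u_{0,i}\,t^{\alpha(t)-1}}{\Gamma(\alpha(t))},\qquad u_i(0)=u_{0,i}.
\end{equation*}
Theorem \ref{thm:Wellpose} (applied to the data assumed here, which are two derivatives smoother) already delivers bounds on $u_i$ and $u_i'$ with the correct $\lambda_i$-weights, i.e.\ $\|u\|_{C^1([0,T];\check H^{s+2})}$ controlled by the data; this will serve as the baseline that is then differentiated once more.

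Next I would differentiate the scalar equation in $t$ to obtain
\begin{equation*}
u_i''(t)+\lambda_i u_i'(t)=f_i'(t)-\frac{d}{dt}\!\left[k(t)\,{}_0^{C}D_t^{1-\alpha(t)}u_i(t)\right]-\frac{d}{dt}\!\left[\frac{k(t)u_{0,i}t^{\alpha(t)-1}}{\Gamma(\alpha(t))}\right].
\end{equation*}
The right-hand side has three sources of potential singular behaviour: (i) direct derivatives of $k$ and of $\Gamma(\alpha(t))$, which are harmless since $k,\alpha\in C^1[0,T]$; (ii) the factor $t^{\alpha(t)-1}$, whose $t$-derivative equals $t^{\alpha(t)-1}\!\left[\frac{\alpha(t)-1}{t}+\alpha'(t)\ln t\right]$; and (iii) the variable-order operator $\frac{d}{dt}{}_0^{C}D_t^{1-\alpha(t)}u_i(t)$, which, via ${}_0^{C}D_t^{1-\alpha(t)}u_i={}_0I_t^{\alpha(t)}u_i'$, splits into a fractional integral of $u_i''$ plus contributions arising from differentiating the exponent $\alpha(t)$ and the prefactor $1/\Gamma(\alpha(t))$ (these involve terms like $\alpha'(t)\ln(t-s)$ inside the convolution). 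I will bound each of these pieces pointwise in $t$ using the $u_i'$-bound from Theorem \ref{thm:Wellpose}, applying the standard Gronwall-type argument for fractional integral inequalities to close the estimate on $u_i''$, uniformly in $i$ after multiplication by $\lambda_i^{s/2}$.

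The three cases then amount to reading off the $t\to 0^+$ behaviour of the above singular factors. In Case 1, $\alpha(0)=1$ with $\alpha'(0)=0$ and $\alpha'(t)\ln t$ bounded forces $t^{\alpha(t)-1}[\frac{\alpha(t)-1}{t}+\alpha'(t)\ln t]$ and the analogous contributions in (iii) to remain bounded on $[0,T]$; together with $u_{0,i}$-data being in $\check H^{s+4}$ and $f\in H^1(0,T;\check H^{s+2})\cap H^2(0,T;\check H^s)$, this yields $u\in C^2([0,T];\check H^s)$ with the stated bound. In Case 2, either $\alpha'(0)\neq 0$ (so $\alpha(t)-1\sim\alpha'(0)t$ cancels the $1/t$ but $\alpha'(t)\ln t$ persists) or $\alpha'(t)\ln t$ is unbounded; in either situation the singular factor is controlled by $|\ln t|$, producing the $|\ln\varepsilon|$ weight on $[\varepsilon,T]$. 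In Case 3, $\alpha(0)<1$ makes $t^{\alpha(t)-1}\sim t^{\alpha(0)-1}$ a genuine power singularity and differentiation introduces a further $1/t$, giving exactly the $\varepsilon^{\alpha(0)-2}$ behaviour claimed.

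The main obstacle I anticipate is (iii): rigorously justifying the interchange of $d/dt$ with the convolution in ${}_0I_t^{\alpha(t)}u_i'$ when the order itself depends on $t$, and then bounding the resulting extra terms containing $\alpha'(t)\ln(t-s)$ uniformly for $s\in(0,t)$. This is where the pointwise hypotheses on $\alpha'$ near $0$ enter in a nontrivial way, and the delicate separation of the three cases really happens. Once these variable-order-specific pieces are tamed, the rest of the argument is a standard fractional-ODE bootstrap, and summation $\sum_i \lambda_i^s(\cdot)^2$ upgrades the scalar estimates to the $\check H^s$ estimates stated in the theorem.
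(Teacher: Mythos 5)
Your proposal matches the paper's route exactly insofar as the paper has one: Theorem~\ref{thm:u2} is not proved in this manuscript but imported from \cite{WanZheJMAA}, whose argument is precisely the separation-of-variables reduction to mode-wise scalar fractional ODEs that you outline, followed by differentiating each scalar equation and tracking the singular factors $t^{\alpha(t)-1}\big[(\alpha(t)-1)/t+\alpha'(t)\ln t\big]$ and the derivative of ${}_0I_t^{\alpha(t)}u_i'$ to separate the three cases. Your case-by-case reading of these factors (bounded, $|\ln\varepsilon|$, $\varepsilon^{\alpha(0)-2}$) is the correct mechanism; the one step you flag as the main obstacle --- rigorously differentiating the variable-order convolution and closing the resulting inequality for $u_i''$ --- is indeed where the substantive work in the cited reference lies and would need to be carried out in full to make this a complete proof.
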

%\begin{remark}
%The following lemma characterizes the relation between the Sobolev spaces $H^s(\Omega)$ and $\check H^s(\Omega)$. 
%\begin{lemma}\label{lem:CheckH}\cite{AdaFou,Tho} For the non-negative integer $\gamma $, $\check{H}^\gamma(\om)$ can be characterized by
%	$$\ds \check{H}^\gamma(\om) = \Big \{v \in H^\gamma(\Omega): ~{\cal L}^j v(\xx) = 0, ~\xx \in \p \om,~~ j < \gamma/2 ,~~j\in \mathbb N^+\cup \{0\} \Big \}.$$
%	The seminorms $| v |_{\check{H}^\gamma}$ and $| v |_{{H}^\gamma}$ are equivalent in $\check{H}^\gamma$.
%\end{lemma}
%
%By the homogeneous boundary conditions of $u$ on $\Omega$, Lemma \ref{lem:CheckH} implies that $\|\cdot\|_{\check H^s(\Omega)}$ is equivalent to $\|\cdot\|_{H^s(\Omega)}$ for $s=0,1,2$. Therefore, all the spatial norms of $u$ and its high order time derivatives in the estimates in Theorem \ref{thm:Wellpose} and \ref{thm:u2} 
%can be replaced by the equivalent standard Sobolev norms for $s=0,1,2$, which provides convenience for the convergence estimates in previous sections.
%\end{remark}

\section*{Acknowledgements}

This work was funded by the OSD/ARO MURI Grant W911NF-15-1-0562 and the National Science Foundation under Grant DMS-1620194.

\end{document}